\documentclass[11pt]{amsart}
\usepackage[hmargin=3cm,vmargin=3cm]{geometry}
\usepackage{combelow} % For Negut

%%%%%%%%% BIB

\usepackage[latin1,utf8]{inputenc} 
\usepackage[english]{babel}
\usepackage{etex}

%%%%%%%%% SYMBOLS ETC
\usepackage{amssymb,amsfonts,amsthm,amsmath,amscd,latexsym}
\usepackage{palatino, euscript}
\linespread{1.05}

%%%%%%%%% DIAGRAM PACKAGES

\usepackage{graphicx}
\usepackage[all]{xy}
\SelectTips{cm}{}

\usepackage{tikz, tikz-cd}
\usetikzlibrary{matrix, arrows, calc}
\tikzset{frontline/.style={preaction={draw=white,-,line width=6pt}},}  %%% for 3d commutative diagrams

\usepackage[dvips]{epsfig}
\usepackage{pinlabel}

\newcommand{\ig}[2]{\vcenter{\xy (0,0)*{\includegraphics[scale=#1]{fig/#2}} \endxy}}

\usepackage{enumitem}

%%%%%%%%%% THEOREM ETC

\newtheorem{thm}{Theorem}[section]

\newtheorem{cor}[thm]{Corollary}

\newtheorem*{prop*}{Proposition}

\theoremstyle{definition}
\newtheorem{defn}[thm]{Definition}

\theoremstyle{remark}

\newtheorem{rem}[thm]{Remark}

\numberwithin{equation}{section}

%%%%%%%%%%%%%%%%%%%%%%%%%%%%%%%%%%%%%%%%%%%%%%%%%%%%%%%%%%%%%%%
%%%%%%%%%%%%%%%%%%%%%%%%%%%%%%%%%%%%%%%%%%%%%%%%%%%%%%%%%%%%%%%
%%%%%%%%%%%%%%%%%%%%%%%%%%%%%%%%%%%%%%%%%%%%%%%%%%%%%%%%%%%%%%%

%%%%%%%%%%% FONTS

% gothic, blackboard

%\def\ig{{\mathfrak i}}%%%%%%%%%%%%%%%%%%%%%%I Love Yeppie!%%%%%%%%%%%%%%%%%%%%%%%%%%%%%%%%%%%%%%%%%

% bold, mathcal

  %\def\cb{{\mathbf c}}
\def\CC{{\mathcal{C}}}

    \def\HC{{\mathcal{H}}}

%  \def\sb{{\mathbf s}} %%%%%%% redefining \sb breaks the bibliography, somehow!!!!

%\def\BS{{\EuScript B}}   %or bott-samelson

%%%%%%%%%%%%%%%% GREEK

\let\phi=\varphi

%%%%%%%%%%%%%%% BBM

\usepackage{bbm}

\def\Z{{\mathbbm Z}}

\def\one{\mathbbm{1}}

%%%%%%%%%%%%%%% GENERAL MACROS

\newcommand{\ot}{\otimes}
\newcommand{\pa}{\partial}
\newcommand{\co}{\colon}

\renewcommand{\to}{\rightarrow}

\newcommand\simto{\xrightarrow{
   \,\smash{\raisebox{-0.65ex}{\ensuremath{\scriptstyle\sim}}}\,}}

% Equal sign with a reference on top
\newcommand{\refequal}[1]{\xy {\ar@{=}^{#1}
(-1,0)*{};(1,0)*{}};
\endxy}

%%%%%%%%%%%%%% TEXT

\DeclareMathOperator{\Hom}{Hom}

\DeclareMathOperator{\End}{End}

\DeclareMathOperator{\Sym}{Sym}

\renewcommand{\1}{\mathbbm{1}}

%%%%%%%%%%%%%%%%%%%%%%%%%%%%%%%%%%%%%%%%%%%%%%%%%%%%%%%%%%%%%%%
%%%%%%%%%%%%%%%%%%%%%%%%%%%%%%%%%%%%%%%%%%%%%%%%%%%%%%%%%%%%%%%
%%%%%%%%%%%%%%%%%%%%%%%%%%%%%%%%%%%%%%%%%%%%%%%%%%%%%%%%%%%%%%%

%%%%%%%%%%%%%%%%% PROJECT SPECIFIC MACROS %%%%%%

\newcommand{\startdotred}{\ig{.5}{startdotred}}

\newcommand{\finaldotred}{\ig{.5}{enddotred}}

\newcommand{\splitred}{\ig{.5}{splitred}}
\newcommand{\mergered}{\ig{.5}{mergered}}
\newcommand{\pitchred}{\ig{.5}{redpitchforkup}}
\newcommand{\pitchdownred}{\ig{.5}{redpitchforkdown}}
\newcommand{\pitchblue}{\ig{.5}{bluepitchforkup}}
\newcommand{\pitchdownblue}{\ig{.5}{bluepitchforkdown}}

\newcommand{\sbotttop}{{
\labellist
\tiny\hair 2pt
\pinlabel {$\dots$} [ ] at 20 27
\pinlabel {$\dots$} [ ] at 20 4
\endlabellist
\centering
\ig{1}{sbotttop}
}}

%%%%%%%%%%%%%%%%%%%%%%%%%%%%%%%%%%%%%
% ===================================
%
% End of macro file.
%
% ===================================
%%%%%%%%%%%%%%%%%%%%%%%%%%%%%%%%%%%%%

\begin{document}

\begin{abstract} The Hecke category is bigraded. For completeness, we classify gradings on the Hecke category. We also classify object-preserving autoequivalences. \end{abstract}

\title{The Hecke category is bigraded}
%A toolkit for conjugation by Rouquier complexes

\author{Ben Elias} \address{University of Oregon, Eugene}

\maketitle

\setcounter{tocdepth}{1}
\tableofcontents

This is a short note written for the experts, so we cut to the chase. We assume the reader is familiar with the diagrammatic Hecke category $\HC$ associated to a realization $V$ of
a Coxeter system $(W,S)$, over a commutative base ring $\Bbbk$. An explicit presentation can be found in \cite[\S 10.2.4]{EMTW}. Typically, this is defined as a $\Z$-graded
category, where e.g. univalent vertices have degree $1$, and elements of $V$ have degree $2$ when viewed within the polynomial ring $R = \Sym(V)$. Note that $R \cong
\End(\1)$, where $\1$ is the monoidal identity.

{\bf Acknowledgments} We wish to thank Matt Hogancamp. The author was supported by NSF grants DMS-2201387 and DMS-2039316.

%========================
\section{The bigrading}
%========================

The new observation is that $\HC$ can actually be bigraded. We believe this minor observation will prove to be quite significant, but for now provide only some brief remarks.

\begin{thm} \label{thm:bigraded} The diagrammatic Hecke category $\HC$ (if it is well-defined, see below) can be defined as a $\Z^2$-graded monoidal category. The bidegrees of the generating morphisms are as follows.
\begin{subequations} \label{eq:bigrading}
\begin{equation} \deg \left( \startdotred \right) = (1,0), \qquad \deg \left( \finaldotred \right) = (0,1), \qquad \deg \left( \splitred \right) = (0,-1), \qquad 
	\deg \left( \mergered \right) = (-1,0), \end{equation}
\begin{equation} \label{eq:degofmst} \deg \left( \sbotttop \right) = (0,0). \end{equation}
\end{subequations}
The picture in \eqref{eq:degofmst} represents an arbitrary $2m_{st}$-valent vertex. The degree of any element of $V$, viewed as a linear polynomial in $R$, is $(1,1)$. The original degree function is obtained from the bidegree via the map
\begin{equation} \Z^2 \to \Z, \qquad (m,n) \mapsto m+n. \end{equation}
\end{thm}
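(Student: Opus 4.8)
The plan is to verify that the proposed assignment of bidegrees is consistent with every defining relation of the diagrammatic Hecke category, exactly as one checks that the usual $\Z$-grading is well-defined. Since the category is given by an explicit presentation (generators and relations, as in \cite[\S 10.2.4]{EMTW}), a $\Z^2$-grading exists precisely when every relation is homogeneous with respect to the proposed bidegrees. So the proof reduces to a finite check: go through the list of defining relations and confirm that both sides have equal bidegree. The composition with the summing map $(m,n)\mapsto m+n$ recovering the original grading is then automatic, and guarantees in particular that the relations which were already known to be homogeneous for the original $\Z$-grading have sides whose bidegrees agree \emph{after} summing — the content of the check is that they agree \emph{before} summing.

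First I would record the bidegree of a few derived morphisms: a barbell (a dot composed with its flip) has bidegree $(1,0)+(0,1) = (1,1)$, consistent with the claim that a linear polynomial in $R$ has bidegree $(1,1)$; an oriented ``trivalent'' circle built from a split and a merge has bidegree $(0,-1)+(-1,0) = (-1,-1)$; and the two cups/caps appearing in biadjointness pair up a dot with a split, etc. Next I would organize the relations into the standard groups: (i) the Frobenius/associativity relations for a single color (one-color relations) — unit, counit, associativity, coassociativity, Frobenius; (ii) the polynomial-forcing (``dot forces a polynomial through'') relations; (iii) the two-color relations for $m_{st}<\infty$, i.e.\ the $2m_{st}$-valent vertex relations (Jones--Wenzl / cyclicity / the ``dot-slides'') ; and (iv) the Zamolodchikov / three-color relations when they apply. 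For each relation I would compute the bidegree of each side and check equality.

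The genuinely load-bearing cases are (ii) and (iii). For polynomial forcing, the key point is that the defining relation expresses the difference of ``polynomial on the left'' minus ``polynomial on the right'' of a strand in terms of a dot-cap-cup term multiplied by a \emph{Demazure-type} correction; since the Demazure operator $\partial_s$ lowers polynomial degree by $2$ in the original grading, I must check it lowers bidegree by exactly $(1,1)$, which is forced because $\partial_s$ sends $V$ (bidegree $(1,1)$) to scalars (bidegree $(0,0)$) and is $R^s$-linear — so the bidegree bookkeeping matches the dot-split pair $(1,0)+(0,-1)=(1,-1)$ against a barbell-shift, and one checks the two sides balance. For the $2m_{st}$-valent relations, the point is simply that assigning the $2m$-valent vertex bidegree $(0,0)$ is consistent: in the original grading this vertex has degree $0$, and I need the finer statement that each side of the two-color relations — which are built from $2m$-valent vertices (degree $(0,0)$), dots, and the invariant polynomials/Jones--Wenzl-type coefficients — has matching bidegree; here I would use that the relevant two-color invariant polynomials, being products of the distinguished roots, must be assigned bidegrees that are multiples of $(1,1)$, so they do not disturb the difference $m-n$ on either side.

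The main obstacle I anticipate is exactly the polynomial-forcing relation together with whatever form the ``$m_{st}=\infty$'' or Jones--Wenzl coefficients take: one must be sure that \emph{every} scalar or polynomial coefficient appearing in a relation has a bidegree proportional to $(1,1)$, equivalently that the ``$m-n$'' grading (the antidiagonal $\Z^2\to\Z$, $(m,n)\mapsto m-n$) is itself a well-defined grading on $\HC$ — that reformulation is the cleanest way to run the argument, since then the theorem becomes: ``the original $\Z$-grading and the antidiagonal $\Z$-grading are both well-defined, hence so is their combined $\Z^2$-grading.'' Checking that the antidiagonal grading is well-defined is where all the work sits: on the one-color generators it assigns $\startdotred \mapsto 1$, $\finaldotred \mapsto -1$, $\splitred\mapsto 1$, $\mergered\mapsto -1$, $V\mapsto 0$, and one verifies Frobenius relations (they involve equal numbers of each type on both sides, so they are homogeneous of antidiagonal degree $0$), polynomial forcing (both sides have antidiagonal degree equal to that of the inserted polynomial, namely $0$, since $V$ has antidiagonal degree $0$ and $\partial_s$ preserves it), and the two-color relations (all $2m$-valent vertices have antidiagonal degree $0$, dots come in balanced configurations, coefficients are polynomials of antidiagonal degree $0$). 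The caveat ``if it is well-defined, see below'' is presumably about the realization hypotheses under which $\HC$ itself is defined; I would simply inherit those hypotheses and note the bigrading check is insensitive to them.
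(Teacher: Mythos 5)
Your overall strategy coincides with the paper's: the paper also proves the theorem by a relation-by-relation homogeneity check against the presentation in \cite[\S 10.2.4]{EMTW} (in fact it proves the general classification, Theorems \ref{thm:multigraded} and \ref{thm:mostgeneral}, this way, and obtains Theorem \ref{thm:bigraded} as the specialization $f_s=(1,0)$, $g_s=(0,1)$, $\deg(V)=(1,1)$). Your repackaging via the antidiagonal degree $(m,n)\mapsto m-n$ is legitimate and equivalent, since two elements of $\Z^2$ agree exactly when both their sum and their difference do, and the sum grading is the already-known $\Z$-grading. The one-color checks, polynomial forcing (the correct bookkeeping is that the broken strand, an enddot followed by a startdot, has bidegree $(1,1)$ while $\partial_s$ lowers bidegree by $(1,1)$; your ``dot-cap-cup'' and ``barbell-shift'' phrasing is garbled but the conclusion is right), the fork-sliding relations, and the Zamolodchikov relations all go through as you indicate.

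The genuine gap is at the single relation where real work is required: the Jones--Wenzl relation (10.7i). First, its coefficients are scalars in $\Bbbk$ (two-colored quantum numbers), not ``invariant polynomials which are products of the distinguished roots''; that misdescription is harmless for degree purposes but signals the relation was not actually inspected. More seriously, your justification that ``dots come in balanced configurations'' is exactly the assertion that has to be proved, and you give no argument for it: the terms of the Jones--Wenzl idempotent are not visibly balanced lists of dots but compositions of trivalent vertices and dots produced by a retraction. The paper's argument is that each term is the deformation retract of a two-colored crossingless matching in the Temperley--Lieb category (see \cite[\S 5.3.2]{ECathedral}); under the retract each cup becomes a pitchfork of bidegree $(1,-1)$ and each cap a pitchfork of bidegree $(-1,1)$ (in the general multigraded setting this step needs $f_s+g_s=f_t+g_t$ so the two colors of pitchfork have equal degree; in the bigraded specialization it is automatic), and a crossingless matching with equally many boundary points on top and bottom has equally many cups as caps, so every term is homogeneous of bidegree $(0,0)$, matching the side built from $2m_{st}$-valent vertices. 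Without this count (or an equivalent one), homogeneity of (10.7i) --- and with it the theorem --- is asserted rather than proved; the rest of your plan is routine bookkeeping that matches the paper.
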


All proofs of theorems can be found in \S\ref{s:proofs}.

\begin{rem} Note that cups and caps have bidegree $(1,-1)$ and $(-1,1)$ respectively. The bidegree of a morphism is an isotopy invariant relative to a fixed boundary, but the bidegree is not preserved under adjunction isomorphisms.  \end{rem}

\begin{rem} We expect that the second degree is in some sense the Soergel-ification of the weight filtration. More precisely, the Frobenius automorphism in geometry should
correspond to rescaling based on the second degree. We give more evidence in a paper currently in preparation. Note that Soergel bimodules obfuscate the weight filtration, as they
arise from semisimple (and thus pure) perverse sheaves. However, the weight filtration should be visible on non-semisimple objects, of which Rouquier complexes are an example.
Indeed, the bigrading above does break the symmetry inherent in Rouquier complexes for positive and negative braids. \end{rem}

\begin{rem} This bigrading may equip Khovanov-Rozansky HOMFLYPT homology with a fourth grading. \end{rem}

\begin{rem} This bigrading extends to the 2-category known as the singular (diagrammatic) Hecke category, which describes singular Soergel bimodules, see \cite[\S 24]{EMTW}. The
idea that the singular Hecke category could be bigraded originated out of \cite[\S 3.4]{EKo}, where two separate length functions $\ell^-$ and $\ell^+$ are defined on double
cosets. \end{rem}

\begin{rem} The diagrammatic Hecke category $\HC$ is not currently defined in types containing $H_3$, because the appropriate Zamolodchikov relation is unknown. Presumably, when
found, it will be compatible with the bigrading. Additional requirements for the well-definedness of $\HC$ are found in \cite[\S 5.1]{EWLocalized}, though see also the simplification
found in \cite{HaziRotatable}. \end{rem}

%========================
\section{Classifying gradings}
%========================

For sake of completeness, let us discuss all possible multigradings on $\HC$ which refine the original grading. It is assumed that the ground ring $\Bbbk$ has degree zero. For sake of simplicity, we first assume that $V$ is the root realization.

\begin{thm} \label{thm:multigraded} Suppose $V$ is the root realization. Let $\Lambda$ be the free abelian group generated by symbols $\{f_s, g_s\}_{s \in S}$, modulo the relation that $f_s + g_s = f_t + g_t$ whenever $m_{st} \ne 2$. Then $\HC$ admits a $\Lambda$-grading, as below. Here $s$ is red and $t$ is blue, and purple strands are either red or blue.
\begin{subequations}  \label{eq:multigrading}
\begin{equation} \label{eq:onecolor} \deg \left( \startdotred \right) = f_s, \qquad \deg \left( \finaldotred \right) = g_s, \qquad \deg \left( \splitred \right) = -g_s, \qquad 
	\deg \left( \mergered \right) = -f_s, \end{equation}
\begin{equation} \label{eq:2mvalent} \deg \left( \sbotttop \right) = \begin{cases} 0 & \text{ if } m_{st} \text{ is even}, \\ g_s - g_t & \text{ if } m_{st} \text{ is odd}. \end{cases} \end{equation}
\end{subequations}
Moreover, this is the universal monoidal grading which refines the original grading.
\end{thm}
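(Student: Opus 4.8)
The plan is to realize $\Lambda$, with the grading of \eqref{eq:multigrading}, as the initial object of the category $\mathcal{G}$ whose objects are abelian groups $\Lambda'$ equipped with a monoidal $\Lambda'$-grading of $\HC$ (with $\Bbbk$ in degree $0$) that refines the original grading --- i.e.\ for which the original degrees factor through some homomorphism $\Lambda'\to\Z$ --- and whose morphisms are homomorphisms of grading groups intertwining the gradings. Such a grading amounts to a tuple $a_s,b_s,c_s,d_s,e_{st},k_s\in\Lambda'$ recording the degrees of the start dot, end dot, split, merge, $2m_{st}$-valent vertex, and the linear polynomial $\alpha_s$ respectively (in the root realization the $\alpha_s$ generate $R=\End(\1)$, so these exhaust the generating morphisms of $\HC$), subject to homogeneity of every defining relation. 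Since $\Lambda$ is generated by the $f_s$ and $g_s$, which are by definition the $\Lambda$-degrees of the start and end dots, any $\mathcal{G}$-morphism $\phi\colon\Lambda\to\Lambda'$ is forced to satisfy $\phi(f_s)=a_s$ and $\phi(g_s)=b_s$; uniqueness is therefore immediate, and the proof reduces to showing: (1) the assignment $f_s\mapsto a_s$, $g_s\mapsto b_s$ respects the relations $f_s+g_s=f_t+g_t$ (for $m_{st}\ne 2$) defining $\Lambda$, so that $\phi$ is a well-defined homomorphism; and (2) $\phi$ then carries the $\Lambda$-degree of every generator to its $\Lambda'$-degree, i.e.\ $c_s=-b_s$, $d_s=-a_s$, $k_s=a_s+b_s$, $e_{st}=0$ when $m_{st}$ is even, and $e_{st}=b_s-b_t$ when $m_{st}$ is odd.

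I would deduce (1) and (2) by working through the defining relations of $\HC$ a block at a time. The Frobenius unit relations (a trivalent vertex capped with a dot on the redundant strand is the identity) give at once $d_s=-a_s$ and $c_s=-b_s$. The relation identifying the $s$-coloured barbell with $\alpha_s$ gives $k_s=a_s+b_s$, and since in the root realization $R$ is the polynomial ring on the $\alpha_s$, the grading on $R$ is thereby completely determined and automatically intertwined by $\phi$. Finally, the polynomial forcing relation applied to the linear polynomial $\alpha_t$ equates $\alpha_t$ placed on one side of an $s$-strand with $s(\alpha_t)=\alpha_t-\langle\alpha_t,\alpha_s^\vee\rangle\alpha_s$ on the other side, plus $\langle\alpha_t,\alpha_s^\vee\rangle$ times the ``broken $s$-strand'' (two dots), which has degree $a_s+b_s$; homogeneity forces $a_t+b_t=a_s+b_s$ whenever $\langle\alpha_t,\alpha_s^\vee\rangle$ is a non-zero-divisor in $\Bbbk$, hence whenever $m_{st}\ne 2$ (at least over a base such as $\Z$ or a characteristic-$0$ field; in general one reads the same identity off the two-coloured relations). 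This is precisely the relation defining $\Lambda$, giving (1).

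It remains to pin down $e_{st}$. Here I would invoke the two-coloured relation rewriting a $2m_{st}$-valent vertex with a single dot on an outer strand as an explicit composite of dots and trivalent vertices; taking degrees yields a linear equation among the already-determined quantities whose solution is $0$ when $m_{st}$ is even and $b_s-b_t$ when $m_{st}$ is odd --- the parity distinction reflecting that for odd $m_{st}$ each of the boundary words $sts\cdots$, $tst\cdots$ begins and ends in the same colour, whereas for even $m_{st}$ it does not. The cleanest instance is $m_{st}=2$, where the $4$-valent vertex capped with a $t$-coloured dot equals a bare $t$-coloured dot, forcing $e_{st}=0$ outright. Applying $\phi$ to \eqref{eq:2mvalent} reproduces these values, so $\phi$ intertwines the gradings on all generators and is the unique morphism of $\mathcal{G}$ with source $\Lambda$. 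Since the first half of Theorem~\ref{thm:multigraded} asserts that the displayed $\Lambda$-grading is itself homogeneous for all relations --- i.e.\ $\Lambda$ is an object of $\mathcal{G}$ --- this exhibits $\Lambda$ as initial, hence universal; in passing it also identifies the bigrading of Theorem~\ref{thm:bigraded} as the quotient $f_s,g_s\mapsto(1,0),(0,1)$.

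The step I expect to cost real effort is the determination of $e_{st}$: locating the right two-coloured relation, reading off its degree, and treating the even and odd cases uniformly. One must also confirm that the remaining relation families --- the one-coloured relations (two dots, needle, Frobenius associativity) and, in rank $3$, the Zamolodchikov relations --- impose no further constraint on $a_s,b_s,e_{st},k_s$. This is consistent with (indeed forced by) the first half of the theorem, since any constraint they imposed would in particular hold for the $\Lambda$-grading and hence already be a consequence of the relations recorded above; but it deserves an explicit check so that ``$\Lambda$ is large enough'' and ``$\Lambda$ is not too large'' are both visibly settled. Finally, the non-root-realization case foreshadowed after the theorem needs separate treatment: there the grading on $R=\Sym(V)$ can carry extra freedom (additional linear degrees, or scalar degrees where $V$ has torsion or trivial direct summands), so the universal group must be enlarged accordingly.
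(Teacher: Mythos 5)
Your universality argument runs essentially parallel to the paper's (one-colour Frobenius relations pin down the split and merge degrees, the barbell pins down $\deg(\alpha_s)$, polynomial forcing with $f=\alpha_t$ yields $f_s+g_s=f_t+g_t$ for $m_{st}\ne 2$, and a two-colour relation pins down the degree of the $2m_{st}$-valent vertex --- the paper uses two-colour associativity where you use the dotted-vertex relation, a harmless difference). But there is a genuine gap: you never prove the first half of the theorem, namely that the assignment \eqref{eq:multigrading} actually defines a $\Lambda$-grading, i.e.\ that \emph{every} defining relation of $\HC$ is homogeneous for it. You explicitly lean on this half (``Since the first half of Theorem~\ref{thm:multigraded} asserts that the displayed $\Lambda$-grading is itself homogeneous for all relations''), which makes the argument circular as a proof of the full statement, and you also use it a second time to dismiss the possibility that the remaining relation families impose further constraints. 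The hard part of the existence check is precisely the Jones--Wenzl relation (10.7i): one must show that every term of the Jones--Wenzl idempotent is homogeneous of degree zero. The paper does this by observing that each term is the deformation retract of a two-coloured crossingless matching, that cups and caps retract to pitchforks whose degrees (using $f_s+g_s=f_t+g_t$ when $m_{st}>2$) are negatives of one another, and that a crossingless matching with equally many boundary points on top and bottom has equally many cups as caps, hence total degree zero. This argument (together with the routine checks of the isotopy relations (10.14) and the Zamolodchikov relations) is absent from your proposal, and without it neither the existence nor, by your own logic, the universality claim is established. Note also that your choice of relation for determining $e_{st}$ feeds back into this gap: its right-hand side is a Jones--Wenzl composite, so reading off its degree for general $m_{st}$ requires exactly the missing analysis, whereas two-colour associativity avoids it.

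A smaller point: you build ``homogeneity of every defining relation'' into your definition of a grading of $\HC$. The paper instead addresses the possibility of a refining grading for which some relation is \emph{not} homogeneous: decomposing such a relation into homogeneous components would force individual diagrams to vanish, imposing a nontrivial quotient on $\HC$ (contradicting known freeness of morphism spaces), so homogeneity of the relations is a consequence, not a hypothesis. As written, your universal property quantifies over a smaller class of gradings than the theorem intends, so this step should be argued rather than assumed.
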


Note that $f_s + g_s = f_t + g_t$ whenever $s$ and $t$ are in the same connected component of the Coxeter graph. Abstractly, $\Lambda$ is a free abelian group of rank $n + k$, where
$n$ is the number of simple reflections, and $k$ is the number of irreducible components of the Coxeter system.

\begin{rem} Moreover, $\Lambda$ can be viewed as the product of the universal grading group for each component. There is a general process whereby, given monoidal categories
$\CC_i$, one can construct formally a monoidal category $\CC = \bigotimes \CC_i$ which contains $\CC_i$ for each $i$, and for which $\CC_i$ tensor-commutes (functorially) with
$\CC_j$ when $j \ne i$. If $\CC_i$ is graded by abelian groups $\Lambda_i$, then $\CC_i$ is graded by $\Lambda = \prod \Lambda_i$. In our case $\HC$ is isomorphic to $\bigotimes
\HC_i$, where $\HC_i$ is the Hecke category of each irreducible component, at least when working with the root realization. \end{rem}

Here is a generalization which deals with other realizations.

\begin{thm} \label{thm:mostgeneral} Let $V$ be a realization. Equip $V$ with a $W$-invariant grading, valued in an abelian group $\Gamma$, for which roots are homogeneous, and
$\deg(\alpha_s) = \deg(\alpha_t)$ whenever $m_{st} \ne 2$. Then this grading on $V \subset R$ extends to a grading on $\HC$ by the
group $(\Lambda \times \Gamma) / I$, where $I$ is the submodule generated by $(f_s + g_s - \deg(\alpha_s))$. \end{thm}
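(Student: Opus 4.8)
The plan is to build the grading by carrying over the degree assignments of Theorem~\ref{thm:multigraded} wholesale, adjoining the given $\Gamma$-grading for the polynomial generators, and then verifying that the resulting degrees make every defining relation of $\HC$ homogeneous. Explicitly, I would set $\overline{\Lambda} := (\Lambda \times \Gamma)/I$, give the one-color generators the degrees in \eqref{eq:onecolor} and the $2m_{st}$-valent vertex the degree in \eqref{eq:2mvalent} (pushed forward along $\Lambda \to \overline{\Lambda}$), and give each $v \in V$, regarded as a polynomial generator of $\HC$, the image of $\deg(v) \in \Gamma$. The subgroup $I = \langle f_s + g_s - \deg(\alpha_s) \rangle$ is forced on us: the barbell relation identifies the composite $\1 \to B_s \to \1$ with $\alpha_s \in R = \End(\1)$, so homogeneity requires $f_s + g_s = \deg(\alpha_s)$, and $\overline{\Lambda}$ is the universal quotient in which this holds. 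I would also note at the outset that $I$ introduces no pathology: since $f_s + g_s = f_t + g_t$ in $\Lambda$ when $m_{st} \ne 2$, imposing $I$ forces $\deg(\alpha_s) = \deg(\alpha_t)$ in $\overline{\Lambda}$ for such pairs, which is consistent with --- and weaker than --- the standing hypothesis on the $\Gamma$-grading.

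The bulk of the work is then homogeneity of the relations from the presentation in \cite[\S 10.2.4]{EMTW}, which I would split into three groups. The polynomial-free relations --- the one-color Frobenius relations, the distant ($m_{st} = 2$) relations, cyclicity, and rotation-invariance of the $2m_{st}$-valent vertex --- involve only generators whose degrees lie in the image of $\Lambda$, so they are homogeneous by precisely the calculation already carried out for Theorem~\ref{thm:multigraded}; the realization is invisible to them. The barbell relation is homogeneous by the construction of $I$. The polynomial slide relation, which moves $f \in R$ across an $s$-colored strand at the cost of $\partial_s(f)$ times the broken line $\1 \to B_s \to \1$, is homogeneous because $W$-invariance of the $\Gamma$-grading makes $s \colon R \to R$ degree-preserving, whence $\partial_s(f) = (f - s(f))/\alpha_s$ lowers $\Gamma$-degree by $\deg(\alpha_s)$, exactly the $\overline{\Lambda}$-degree $f_s + g_s$ of the broken line.

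That leaves the genuinely delicate relations: the two-color relations for $3 \le m_{st} < \infty$ --- in particular the Jones--Wenzl-type reductions, whose lower terms carry polynomial coefficients --- and, in the types where $\HC$ is defined, the rank-three Zamolodchikov relations. Here I would argue that homogeneity transfers from the root realization. The polynomial coefficients in these relations are given by realization-independent universal formulas in the roots $\alpha_s, \alpha_t$ and the degree-zero scalars $\langle \alpha_s^\vee, \alpha_t \rangle$; since the $\Lambda$-grading of Theorem~\ref{thm:multigraded} satisfies $\deg(\alpha_s) = \deg(\alpha_t)$ whenever $m_{st} \ne 2$ --- exactly what our hypothesis provides, now inside $\Gamma$ --- each such coefficient is $\Gamma$-homogeneous for any realization, and the whole relation, being homogeneous for the root realization by Theorem~\ref{thm:multigraded}, remains homogeneous here. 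I expect this transfer to be the main obstacle to write out carefully: it rests on the assertion that the polynomial coefficients of all two-color and rank-three relations depend on the realization only through roots and degree-zero scalars, which is visible in every case where $\HC$ is currently defined and, per the remarks above, should persist for the still-unknown $H_3$ relation. Finally, I would record that the grading induced on $R = \End(\1)$ restricts on $V$ to the pushforward of the original $\Gamma$-grading, so the construction genuinely extends it.
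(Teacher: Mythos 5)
Your overall scaffolding matches the paper's: the paper proves Theorems \ref{thm:multigraded} and \ref{thm:mostgeneral} by one and the same verification, assigning the degrees \eqref{eq:onecolor}--\eqref{eq:2mvalent}, forcing $f_s + g_s = \deg(\alpha_s)$ from the barbell relation, and using $W$-invariance of the $\Gamma$-grading on $V$ to handle polynomial forcing, exactly as you do. The divergence --- and the gap --- is in how you treat the two-color Jones--Wenzl relation. First, a factual slip: in the presentation of \cite[\S 10.2.4]{EMTW} the lower terms of that relation do not carry polynomial coefficients; the coefficients are scalars of degree zero, built from the Cartan entries of the realization. So the delicate issue is not homogeneity of coefficients but the fact that the many distinct diagrams appearing in the Jones--Wenzl expansion all share one degree. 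Second, your transfer-from-the-root-realization argument does not by itself establish this: homogeneity of a relation only constrains the diagrams occurring with \emph{nonzero} coefficient, and which coefficients vanish depends on the realization. To make the transfer sound you would additionally have to check that every crossingless-matching term occurs with nonzero coefficient at the root realization (plausible, via nonvanishing of the relevant two-colored quantum numbers, but nowhere argued in your proposal), or else prove directly that each term has the required degree.

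The paper does the latter, and this is the one idea your proposal is missing: each term of the Jones--Wenzl expansion is the deformation retract of a two-colored crossingless matching, cups and caps retract to pitchforks, and once $f_s + g_s = f_t + g_t$ --- which holds in $(\Lambda \times \Gamma)/I$ because $\deg(\alpha_s) = \deg(\alpha_t)$ whenever $m_{st} \ne 2$ --- the degree of a retracted cap is the negative of that of a retracted cup. Since a crossingless matching with equally many boundary points on top and bottom has as many cups as caps, every term is homogeneous of degree zero, which makes the relation homogeneous uniformly in the realization. This direct computation replaces your transfer step and closes the gap; with it, the rest of your verification (one-color relations, the $m_{st}=2$ case, two-color associativity, isotopy, and the Zamolodchikov relations, which involve no coefficients to worry about) goes through essentially as you describe and as the paper does.
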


The bigrading of Theorem \ref{thm:bigraded} is a specialization of the one in Theorem \ref{thm:mostgeneral}, when all of $V$ has degree $1 \in \Gamma = \Z$.

%========================
\section{Classifying autoequivalences}
%========================

\begin{defn}
Let $\CC$ be an additive $\Bbbk$-linear category which is graded by an abelian group $\Lambda$. Let $\chi : \Lambda \to \Bbbk^\times$ be a group homomorphism. Then $\CC$ admits an autoequivalence $\Theta_{\chi}$ which fixes all objects, and acts on homogeneous morphisms by the formula
\begin{equation} \Theta_{\chi}(f) = \chi(\deg(f)) \cdot f. \end{equation}
We call such an autoequivalence a \emph{degree-based rescaling}.
\end{defn}

It is straightforward to prove that degree-based rescalings are well-defined equivalences. Moreover, if $\CC$ is graded monoidal, then $\Theta_{\chi}$ is a monoidal equivalence,
as the monoidal composition is also compatible with the grading.

Consequently, $\HC$ admits a multi-parameter family of monoidal degree-based rescalings. Even though these autoequivalences were produced by considering more general
gradings on $\HC$, they can be viewed as autoequivalences of the original, singly-graded category $\HC$. The two-parameter family of autoequivalences coming from the bigrading of Theorem \ref{thm:bigraded} has been known to the experts for some time. For example, a special case was used in \cite[Lemma 4.18]{MackaayTubbenhauer}.

It should not be terribly surprising that autoequivalences which fix objects might be in bijection with gradings by abelian groups.

\begin{thm} \label{thm:autoequiv} Let $\Theta$ be a monoidal autoequivalence of the (original, $\Z$-graded) category $\HC$ which fixes all objects. Then $\Theta = \Theta_{\chi}$ for
some character $\chi : (\Lambda \times \Gamma) / I \to \Bbbk^\times$, where $V$ is equipped with some $\Gamma$-valued $W$-invariant grading, and $(\Lambda \times \Gamma) / I$ is the
universal grading group from Theorem \ref{thm:mostgeneral}. \end{thm}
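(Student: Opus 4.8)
The strategy is to reduce the statement about an abstract autoequivalence to a statement about its effect on the generating morphisms, and then show that this effect is exactly a character on the universal grading group. First I would observe that $\Theta$, being monoidal and object-fixing, must send each $B_s$ to itself (up to the identity) and hence restricts to a $\Bbbk$-algebra automorphism of $R = \End(\1)$. The key preliminary step is to pin down what $\Theta$ does on $R$: since $\Theta$ fixes $\1$ and is monoidal, the automorphism of $R$ it induces must be compatible with the $W$-action and with all the structure maps (e.g. the maps $R \to \End(B_s)$, the Demazure/Frobenius structure). For the root realization one shows this forces $\Theta|_R$ to be a rescaling of the roots $\alpha_s \mapsto c_s \alpha_s$; for a general realization $V$ one records that $\Theta|_V$ is a graded automorphism of $V$ commuting with $W$, and this is precisely the data of a $\Gamma$-grading on $V$ together with a character, where $\Gamma$ is the grading group making $\Theta|_V$ into a degree-based rescaling. (Concretely: decompose $V$ into generalized eigenspaces of $\Theta|_V$; the eigenvalues live in $\Bbbk^\times$, $W$-invariance forces $\alpha_s,\alpha_t$ to share an eigenvalue when $m_{st}\neq 2$, and we let $\Gamma$ be the group generated by the eigenvalues, or rather take the finest $W$-invariant grading refining this eigenspace decomposition.)

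Next I would analyze the action of $\Theta$ on the lowest-valent generators. Because $\Theta$ fixes objects, $\Theta$ sends $\startdotred : \1 \to B_s$ to a scalar multiple $a_s \cdot \startdotred$ of itself — here one uses that $\Hom(\1, B_s)$ is a free $R$-module of rank one generated by the startdot, together with the fact that $\Theta|_R$ is already known, to see that the image must be $(\text{unit of } R)\cdot \startdotred$; monoidality and the interaction with polynomials forces that unit to be a scalar in $\Bbbk^\times$. Similarly $\finaldotred \mapsto b_s \cdot \finaldotred$, and from the relations (barbell = startdot $\cdot$ finaldot composed appropriately, the "needle" relation, and the polynomial forcing / barbell relations which involve $\alpha_s$) one deduces $a_s b_s = c_s$, i.e. the scalars on the two univalent vertices multiply to the scalar by which $\alpha_s$ was rescaled. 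The splitred and mergered morphisms are then constrained by the "unit" and "counit" (associativity/coassociativity, or the one-color relations presenting $\End(B_s)$ and $\Hom(B_s, B_sB_s)$): one shows $\Theta(\splitred) = b_s^{-1}\splitred$ and $\Theta(\mergered) = a_s^{-1}\mergered$ up to checking these are consistent with all one-color relations (Frobenius, the $B_s B_s \cong B_s(1)\oplus B_s(-1)$ decomposition). Finally, for the $2m_{st}$-valent vertices, $\Theta$ multiplies each by some scalar $e_{st}\in\Bbbk^\times$, and the two-color relations — the Zamolodchikov relation, the "dot-on-$2m$-valent-vertex" relations, and the cyclicity/rotation relations — pin down $e_{st}$ in terms of the $a$'s and $b$'s exactly as dictated by the formula \eqref{eq:2mvalent} for the universal $\Lambda$-grading (with the convention that in the even case the scalar is forced to be $1$, and in the odd case it is forced to be $b_s b_t^{-1}$ applied to the character).

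Having collected the scalars $\{a_s, b_s, e_{st}, c_s\}$, the last step is bookkeeping: the assignments $f_s \mapsto a_s$, $g_s \mapsto b_s$, $\deg(\alpha_s)\mapsto c_s$ assemble into a well-defined group homomorphism $\chi$ out of the universal grading group $(\Lambda \times \Gamma)/I$ of Theorem \ref{thm:mostgeneral}, precisely because the relations we derived among the scalars ($a_sb_s = c_s$, the one-component relations $f_s+g_s = f_t+g_t$ coming from $c_s = c_t$ when $m_{st}\neq 2$ being forced by $W$-invariance of $\Theta|_V$, and the constraints on $e_{st}$) are exactly the defining relations of that group. One then checks $\Theta$ and $\Theta_\chi$ agree on all generating morphisms, hence on all of $\HC$ since the generators generate. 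The main obstacle I anticipate is the rigidity argument in the first step — showing that an abstract monoidal automorphism fixing objects cannot do anything to $R$ beyond rescaling $V$, and in particular cannot, say, send $\alpha_s$ to $\alpha_s + (\text{something})$; this requires carefully using the grading (so that $\Theta|_R$ is a graded automorphism, hence $\Theta(\alpha_s)$ is again a linear polynomial) together with enough of the defining relations of $\HC$ (the polynomial-forcing relations relating $\startdotred\,\finaldotred$-barbells to $\alpha_s$, and the $W$-equivariance encoded in the $2m_{st}$-valent vertices) to rule out any non-rescaling behavior, and to handle realizations $V$ that are not the root realization, where the choice of $\Gamma$ is genuinely part of the output rather than fixed in advance.
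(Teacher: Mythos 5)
Your proposal is correct and follows essentially the same route as the paper: the paper runs the autoequivalence argument in lockstep with its classification of gradings, extracting scalars $\kappa_s,\lambda_s$ on the two dots from one-dimensionality of the graded Hom spaces, forcing the scalars on the trivalent and $2m_{st}$-valent vertices via the unit/counit and two-color associativity relations, and deriving $\Theta(\alpha_s)=\kappa_s\lambda_s\,\alpha_s$ together with the component relations from the barbell and polynomial-forcing relations --- exactly the constraints you list before assembling them into a character of $(\Lambda\times\Gamma)/I$. The rigidity step you flag as the main obstacle is handled in the paper exactly by those same barbell and polynomial-forcing relations (so it is immediate rather than delicate), and the rest of your argument matches the paper's.
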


By composing with a color-swap autoequivalence coming from a Dynkin diagram, this theorem also classifies autoequivalences which permute the generating objects $\{B_s\}$.

To give an example of the utility of this theorem, consider the following corollary.

\begin{cor} \label{cor:identitycriterion} Let $\Theta$ be a $\Z$-graded monoidal autoequivalence of $\HC$ which \begin{enumerate}
	\item fixes $B_s$ for each $s \in S$, 
	\item fixes polynomials, and
	\item fixes $\finaldotred$, the generator of $\Hom(B_s,\1)$ for each $s \in S$. \end{enumerate}
Then $\Theta$ is equal to the identity functor. \end{cor}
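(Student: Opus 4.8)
The plan is to deduce Corollary~\ref{cor:identitycriterion} from Theorem~\ref{thm:autoequiv} by pinning down the character $\chi$. By that theorem, $\Theta = \Theta_\chi$ for some character $\chi : (\Lambda \times \Gamma)/I \to \Bbbk^\times$, so it suffices to show that the three hypotheses force $\chi$ to be trivial, hence $\Theta_\chi = \id$. Since $\Theta_\chi$ acts on a homogeneous morphism $f$ by the scalar $\chi(\deg f)$, the statement ``$\Theta$ fixes $f$'' for a morphism $f$ spanning a graded Hom-space is equivalent to $\chi(\deg f) = 1$. So the task is to check that the degrees appearing in hypotheses (1)--(3) generate the grading group $(\Lambda \times \Gamma)/I$.

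First I would record what hypotheses (2) and (3) give directly. Fixing polynomials means $\chi$ kills the degree of every homogeneous element of $V \subset R$; since these degrees are exactly the image of $\Gamma$ (the roots, and more generally all of $V$, are homogeneous for the chosen $\Gamma$-grading), we get $\chi|_\Gamma = 1$. Fixing $\finaldotred = \,$the counit generator of $\Hom(B_s,\1)$ for each $s$ gives $\chi(g_s) = 1$ for all $s \in S$, using the degree assignments of Theorem~\ref{thm:multigraded}/\ref{thm:mostgeneral} where $\deg(\finaldotred) = g_s$. It remains to recover the $f_s$. Here hypothesis (1), that $\Theta$ fixes the object $B_s$, does not immediately give a morphism equation, so the key step is to use it together with the fact that $\Theta$ is a \emph{monoidal} autoequivalence fixing objects: the canonical (co)evaluation/biadjunction data relating $B_s$ to itself must be respected up to the coherence of the monoidal structure. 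Concretely, in $\HC$ the relation $\finaldotred \circ \startdotred = \alpha_s \in R$ (the ``barbell'' or the composite of start-dot and end-dot on one strand) holds; applying $\Theta_\chi$ to both sides gives $\chi(f_s)\chi(g_s)\,\alpha_s = \chi(\deg\alpha_s)\,\alpha_s$, and since $\chi(g_s) = 1$ and $\chi(\deg\alpha_s) = 1$ we conclude $\chi(f_s) = 1$. (This is precisely the relation $f_s + g_s = \deg(\alpha_s)$ that was quotiented out in forming $I$, so no new information is needed beyond what we already have — the point is simply that $f_s$ is not independent once $g_s$ and $\Gamma$ are killed.)

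Having shown $\chi(f_s) = \chi(g_s) = 1$ for all $s$ and $\chi|_\Gamma = 1$, and since $\Lambda$ is generated by $\{f_s, g_s\}_{s \in S}$ and $(\Lambda \times \Gamma)/I$ is a quotient of $\Lambda \times \Gamma$, the character $\chi$ is trivial on the whole group. Therefore $\Theta_\chi$ acts as the identity scalar on every homogeneous morphism, so $\Theta = \Theta_\chi = \id$ as a functor (it is already the identity on objects by hypothesis, and now on morphisms, compatibly with composition and the monoidal structure).

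I expect the only genuine subtlety to be the justification that $\Theta$ fixing the \emph{object} $B_s$ (rather than being given to fix specific structural morphisms) is enough to invoke the relation $\finaldotred \circ \startdotred = \alpha_s$ in the scalar computation: one must be slightly careful that $\Theta$, being monoidal and object-fixing, sends $\startdotred \in \Hom(\1, B_s)$ to \emph{some} nonzero scalar multiple of itself — which follows because $\Hom(\1,B_s)$ is one-dimensional in the relevant degree — so that $\Theta_\chi(\startdotred) = \chi(f_s)\startdotred$ is the correct normalization. Everything else is bookkeeping with the generators of $\Lambda$ and the definition of $I$, and should be only a few lines.
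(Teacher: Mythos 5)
Your proof is correct and is essentially the paper's own argument: invoke Theorem~\ref{thm:autoequiv} to write $\Theta = \Theta_\chi$, use hypotheses (2) and (3) to kill $\chi$ on $\Gamma$ and on the degree of $\finaldotred$, and then use the relation $f_s + g_s = \deg(\alpha_s)$ imposed by $I$ (equivalently, the barbell relation, as you spell out) to kill the remaining generator, so $\chi$ is trivial and $\Theta_\chi = \id$. The only cosmetic differences are the order in which $f_s$ and $g_s$ are eliminated (your version matches the stated degree $\deg(\finaldotred)=g_s$, whereas the paper's proof text writes $\chi(f_s)=1$ first) and your closing caveat about object-fixing, which is already absorbed into the statement of Theorem~\ref{thm:autoequiv}, so nothing further is needed.
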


\begin{proof} By the previous theorem, $\Theta = \Theta_{\chi}$ for some $\chi$.  Fixing $\finaldotred$ implies that $\chi(f_s) = 1$ for all $s \in S$. Fixing polynomials implies that $\chi(\Gamma) = 1$. Therefore $\chi(f_s + g_s) = 1$, so $\chi(g_s) = 1$. Thus $\chi$ sends the entire grading group to $1 \in \Bbbk^{\times}$, and $\Theta_{\chi}$	is the identity functor. \end{proof}

An quick consequence of Corollary \ref{cor:identitycriterion} is the following statement about functors out of the Hecke category.

\begin{cor} \label{cor:isomcriterion}  Let $F$ and $G$ be monoidal $\Z$-graded fully-faithful functors from $\HC$ to a graded additive monoidal category $\CC$, for which \begin{enumerate}
	\item we have chosen an isomorphism $\phi_1 \colon F(\one) \simto G(\one)$ which respects the monoidal identity structure,
	\item we have chosen isomorphisms $\phi_s \colon F(B_s) \simto G(B_s)$ for each $s \in S$, 
	\item $F$ and $G$ agree on polynomials, after intertwining with $\phi_1$, and
	\item $F$ and $G$ agree on $\finaldotred$ for each $s \in S$, after intertwining with $\phi_s$ and $\phi_1$. \end{enumerate}
Then $F \cong G$ via the unique monoidal natural transformation extending $\phi_1$ and $\{\phi_s\}$. \end{cor}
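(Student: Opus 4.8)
The plan is to transport $G$ back along $F$ to an endofunctor $\Theta$ of $\HC$, show that $\Theta$ satisfies the hypotheses of Corollary \ref{cor:identitycriterion}, and conclude $\Theta = \id$ — which turns out to be exactly the statement that the candidate natural transformation is natural.

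First I would pin down the candidate $\eta \colon F \Rightarrow G$ on objects. Since $\HC$ is generated, as a graded additive monoidal category, by $\one$ and the $B_s$, a monoidal natural transformation is determined by its components there; so there is at most one monoidal natural transformation extending $\phi_1$ and $\{\phi_s\}$, and I would define $\eta_X$ for a general Bott--Samelson object $X = B_{s_1}\cdots B_{s_k}$ to be the evident composite built from $\phi_{s_1}, \dots, \phi_{s_k}$ and the monoidal structure isomorphisms of $F$ and $G$ (unambiguous by Mac Lane coherence), with $\eta_\one = \phi_1$, extended to shifts and direct sums in the only possible way. Each $\eta_X$ is then an isomorphism, and what must be shown is that $\eta_Y \circ F(\alpha) = G(\alpha) \circ \eta_X$ for every morphism $\alpha \colon X \to Y$ in $\HC$.

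Next, since $F$ is fully faithful and $\CC$ is $\Z$-graded, for each homogeneous $\alpha \colon X \to Y$ the morphism $\eta_Y^{-1} \circ G(\alpha) \circ \eta_X \colon F(X) \to F(Y)$ lies in the image of $F$, so there is a unique $\Theta(\alpha) \colon X \to Y$ of the same degree with $F(\Theta(\alpha)) = \eta_Y^{-1} \circ G(\alpha) \circ \eta_X$; extend $\Theta$ additively over degrees. Using faithfulness one checks directly that $\Theta$ is a $\Z$-graded functor fixing all objects, and — the only genuinely fiddly point — that $\Theta$ is monoidal, which uses the monoidal structure data of $F$, $G$, $\CC$ together with the monoidal way $\eta$ was defined on objects. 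Then I would verify the three hypotheses of Corollary \ref{cor:identitycriterion}: $\Theta$ fixes each $B_s$ by construction; $\Theta$ fixes polynomials because hypothesis (3) is precisely the assertion $\phi_1^{-1} \circ G(p) \circ \phi_1 = F(p)$ for $p \in R = \End(\one)$, whence $F(\Theta(p)) = F(p)$; and $\Theta$ fixes $\finaldotred$ because hypothesis (4) is precisely $\phi_1^{-1} \circ G(\finaldotred) \circ \phi_s = F(\finaldotred)$. Corollary \ref{cor:identitycriterion} then gives $\Theta = \id$.

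Finally, $\Theta = \id$ unwinds to $F(\alpha) = \eta_Y^{-1} \circ G(\alpha) \circ \eta_X$ for all $\alpha$, i.e. the naturality of $\eta$; since each component is invertible and $\eta$ was built to be monoidal, $\eta$ is the desired monoidal natural isomorphism, and its uniqueness was noted at the outset. The main obstacle here is not conceptual but organizational: setting up $\eta$ on objects and verifying that $\Theta$ is monoidal in the presence of the (possibly non-strict) monoidal structures, and making the phrase ``agree after intertwining'' precise enough that hypotheses (3)--(4) feed cleanly into Corollary \ref{cor:identitycriterion}. (If one does not wish to assume the $\phi_s$ are degree-$0$, one first absorbs their degrees, e.g. by composing with an appropriate grading shift, so that $\Theta$ is honestly $\Z$-graded.)
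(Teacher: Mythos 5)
Your proposal is correct and follows essentially the same route as the paper: build the candidate components $\eta_X$ monoidally from $\phi_1$ and $\{\phi_s\}$, use full faithfulness to transport $G$ back to an object-fixing monoidal endofunctor $\Theta$ of $\HC$, check the three hypotheses, and apply Corollary \ref{cor:identitycriterion}, with $\Theta=\id$ unwinding to naturality. The only (cosmetic) difference is that you transport along $F$ (defining $\Theta(\alpha)$ by $F(\Theta(\alpha)) = \eta_Y^{-1}\circ G(\alpha)\circ \eta_X$) whereas the paper transports along $G$; otherwise this is exactly the paper's ``apply Corollary \ref{cor:identitycriterion} to $F^{-1}\circ G$'' argument, with the fiddly points (monoidality of $\Theta$, the meaning of ``agree after intertwining'') flagged just as the paper leaves them to the reader.
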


\begin{proof}(Sketch) A fully faithful functor can be inverted on its essential image, so, loosely speaking, we apply Corollary \ref{cor:identitycriterion} to $F^{-1} \circ G$. A more careful proof appears in \S\ref{s:proofs}. \end{proof}

Both corollaries are tools which allow one to prove results about autoequivalences or functors with the minimum of computational effort. For example, using Corollary
\ref{cor:identitycriterion} we can avoid computing how $\Theta$ acts on most of the generators of the Hecke category, including the complicated $2m$-valent vertices.
%In forthcoming work with Matt Hogancamp we use Corollary \ref{cor:isomcriterion} to assist the proof of folklore results about conjugation by Rouquier complexes.

\begin{rem} Corollary \ref{cor:isomcriterion} will be applied in a followup paper \cite{EHBraidToolkit} to give quick proofs of folklore results about conjugation by braids. Let $K^b(\HC)$ denote the category of bounded chain complexes in $\HC$, with morphisms considered modulo homotopy. For example, it is a folklore theorem for $W =
S_n$ that the (Rouquier complex of the) full twist braid $FT$ is in the Drinfeld center of the Hecke category, i.e. conjugation by the full twist is isomorphic to the identity
functor of $K^b(\HC)$. Using Corollary \ref{cor:isomcriterion}, one can efficiently prove that the two functors $\HC \to K^b(\HC)$ below, \begin{equation} X \mapsto X \qquad \text{
and } \qquad X \mapsto FT \otimes X \otimes FT^{-1}, \end{equation} are isomorphic.  It is less well-known that conjugation by the half twist braid is equivalent to the composition of a Dynkin diagram automorphism, and a degree-based rescaling $\Theta_{\chi}$. Here we use the bigrading of Theorem \ref{thm:bigraded}, and $\chi(a,b) = (-1)^b$. \end{rem}

\begin{rem} The previous remark treats isomorphisms between functors $\HC \to K^b(\HC)$. These functors extend to functors $K^b(\HC) \to K^b(\HC)$, though it does not automatically follow that the natural isomorphism of functors will extend to the homotopy category. We prove this extension result in \cite{EHBraidToolkit} as well. \end{rem}

%========================
\section{Proofs} \label{s:proofs}
%========================

A presentation of $\HC$ can be found in \cite[\S 10.2.4]{EMTW}. All equation references of the form (8.X) or (10.X) refer to \cite{EMTW}. In this section we prove Theorem
\ref{thm:multigraded}, Theorem \ref{thm:mostgeneral}, Theorem \ref{thm:autoequiv}, and Corollary \ref{cor:isomcriterion} simultaneously. The arguments are parallel, the only difference is the setup. Corollary \ref{cor:isomcriterion} has the most distracting setup, so we postpone its discussion until the end.

Suppose we have a new grading on $\HC$ by an abelian group which refines the original grading. Since $\startdotred$ lives in a one-dimensional graded morphism space, it must be
homogeneous for the new grading. Write $f_s$ for its new degree. Similarly, let $\Theta$ be a monoidal autoequivalence which fixes objects. Therefore $\Theta$ sends $\startdotred$
to some scalar multiple of $\startdotred$. Write $\kappa_s$ for this scalar.

By the same argument, each of the other one-color generators of $\HC$ is homogeneous for the new grading, and is sent by $\Theta$ to scalar multiples of themselves. Write $g_s$ for
the new degree of $\finaldotred$, and $\lambda_s$ for the scalar under $\Theta$. By examining the unit and counit relations (8.5ab) we see that the degree of the split must be
$-g_s$, and the degree of the merge must be $-f_s$, as in \eqref{eq:onecolor}. Similarly, the scalar on the split must be $\lambda_s^{-1}$, and the scalar on the merge must be
$\kappa_s^{-1}$.

In particular, by the barbell relation (10.7c), $\alpha_s$ has degree $f_s + g_s$ and is rescaled by $\kappa_s \lambda_s$. Pick $s \ne t$. In order for the polynomial forcing relation
(10.7d) to be homogeneous or be preserved by $\Theta$, when we set $f = \alpha_t$ (in the notation of (10.7d)), we deduce that $f_s + g_s = f_t + g_t$ whenever $\pa_s(\alpha_t) \ne 0$. Similarly, we deduce that $\kappa_s \lambda_s = \kappa_t \lambda_t$ whenever $\pa_s(\alpha_t) \ne 0$.

The reader should now see that the arguments involving $\Theta$ and the rescaling parameters like $\kappa_s$ are precisely analogous to the arguments involving homogeneity and degrees
like $f_s$. We now omit the discussion of scalars, noting that Theorem \ref{thm:autoequiv} is a direct consequence of this parallel behavior, once the classification of gradings is
shown.

Let us also address general elements of $V$, as in Theorem \ref{thm:mostgeneral}. Since $V$ is homogeneous with respect to the original grading, it must split into homogeneous
components with respect to the new grading. Choose $v \in V$ homogeneous for the new grading. In order for (10.7d) to be homogeneous, we need $\deg(s(v)) = \deg(v)$. Consequently, the
grading on $V$ is $W$-invariant. If $\pa_s(v) = 0$, then $s(v) = v$ and (10.7d) is homogeneous. If $\pa_s(v) \ne 0$ then $s(v) = v - \pa_s(v) \alpha_s$, and the $W$-invariance of the
grading implies that $\deg(v) = \deg(\alpha_s)$. Now one can confirm that (10.7d) is homogeneous.

Continuing, note that the $s$-colored cap has degree $g_s - f_s$, and the cup has degree $f_s - g_s$. It is straightforward to verify that the isotopy relations (10.14abc) are
homogeneous. Now the remaining one-color relations can be stated up to isotopy as in (10.7a-e), and are clearly homogeneous.

Pick $s \ne t$ with $m_{st} < \infty$. By the same argument as above, the $2m_{st}$-valent vertex is homogeneous for the new grading. We set
\begin{equation} \deg \left( \sbotttop \right) = h_{s,t}. \end{equation}
A priori, it is possible that $h_{s,t} \ne h_{t,s}$.

When $m_{st}$ is even, the two-color associativity relation (see (10.7fh)) is homogeneous if and only if $h_{s,t} - g_s = 2 h_{s,t} - g_s$, or equivalently, $h_{s,t} = 0$. When
$m_{st}$ is odd, the two-color associativity relation (see (10.7g)) is homogeneous if and only if $h_{t,s} - g_s = 2h_{t,s} - g_t$, or equivalently, $h_{t,s} = g_t - g_s$. Thus the
degree $h_{s,t}$ agrees with \eqref{eq:2mvalent}. It is now easy to confirm the homogeneity of the isotopy relation (10.14d) and the Zamolodchikov relations (see (10.7mn)).

It remains to discuss the Jones-Wenzl relation (10.7i). We claim that the Jones-Wenzl idempotent (and each term therein) is homogeneous of graded degree zero, which suffices to prove that (10.7i) is homogeneous. The case $m_{st} = 2$ is easy. Any term in the Jones-Wenzl idempotent is the deformation retract of a (two-colored) crossingless matching in some Temperley-Lieb algebra, see \cite[\S 5.3.2]{ECathedral}. Cups and caps deformation retract to pitchforks of various colors. When $m_{st} > 2$, we know that $f_s + g_s = f_t + g_t$, from which we deduce
\begin{subequations}
\begin{equation} \deg(\pitchred) = f_t - g_s = \deg(\pitchblue) = f_s - g_t, \end{equation}
\begin{equation} \deg(\pitchdownred) = g_t - f_s = \deg(\pitchdownblue) = g_s - f_t. \end{equation}
\end{subequations}
Thus the degree of (the deformation retract of) a cap is opposite that of a cup. Any crossingless matching with the same number of boundary points on top and bottom has the same number of cups as caps, so the overall degree of the deformation retract in $\HC$ is zero.

We have now confirmed that the homogeneity of all relations in $\HC$ is equivalent to the grading being defined as in Theorem \ref{thm:mostgeneral}. This concludes the proof, modulo the following remark.

It is conceivable that $\HC$ might admit a grading refining the original $\Z$-grading, but for which the relations of (10.7) and (10.14) were not homogeneous. If so, each
homogeneous component of a non-homogeneous relation would give a new equality. For example, suppose that (10.7h) was not homogeneous. We have already argued that each of the
diagrams appearing must be homogeneous of some degree, so if their degrees did not agree, then the new equalities would state that each diagram is individually zero. This would
clearly impose a nontrivial quotient upon $\HC$, a contradiction. By examination, one quickly determines that each of the relations in the presentation of \cite{EMTW} must be
homogeneous in order for $\HC$ (and not some quotient of $\HC$) to admit a grading.

Finally, let us adapt this proof to the setup of Corollary \ref{cor:isomcriterion}. We remind the reader that, in applications, $F$ and $G$ might be functors like conjugation by an
invertible Rouquier complex $T$. So $F(\one) = T \ot T^{-1}$ is isomorphic to $\one$, but is not the same object. Part of what it means for $F$ to be a monoidal functor is that
$F(\one)$ can be equipped with the structure of a monoidal identity, or equivalently, that it can be made isomorphic to $\one$ in a fashion compatible with the structure maps of the
monoidal identity. Moreover, we assume $\phi_1$ intertwines this structure for $F$ with that of $G$, and leave the details to the reader.

For any object $X$ of $\HC$, let $\phi_X \co F(X) \to G(X)$ be the unique map which monoidally extends the maps $\phi_1$ and $\{\phi_s\}$. For example, if $X = B_s B_t$ then $\phi_X = \phi_s \ot \phi_t$. Any object is a tensor power of $\one$ and various $B_s$, so this map $\phi_X$ makes sense. Let $\Theta_{X,Y} \co \Hom_{\HC}(X,Y) \to \Hom_{\HC}(X,Y)$ be the unique linear map making the following diagram commute.
\begin{equation}
	\begin{tikzcd}[column sep = huge]
	\Hom_{\HC}(X,Y) \arrow[d,"F"] \arrow[r,"\Theta_{X,Y}"] & \Hom_{\HC}(X,Y) \arrow[d,"G"] \\
	\Hom_{\CC}(F(X),F(Y)) \arrow[r,"\phi_Y \circ (-) \circ \phi_X^{-1}"] & \Hom_{\CC}(G(X),G(Y))
	\end{tikzcd}
\end{equation}

By assumption, $\Theta_{\one, \one}$ is the identity map on polynomials, and $\Theta_{B_s,\one}$ fixes the dot. Note that $\Theta_{\one,B_s}$ is a $\Bbbk$-linear map between free rank 1 $\Bbbk$-modules, so it must rescale the other dot by some factor $\kappa_s \in \Bbbk^{\times}$. One can continue the argument as above. Alternatively, one can observe that $\Theta_{X,Y}$ is the action on morphism spaces of some autoequivalence $\Theta$ which fixes objects. Then one can apply Corollary \ref{cor:identitycriterion} to finish the proof.

\bibliographystyle{plain}
\bibliography{mastercopy}

\end{document}